\documentclass[11pt]{amsart}

\usepackage{amsmath,amssymb}
\usepackage[margin=2.8cm]{geometry}
\DeclareFontShape{OT1}{cmss}{m}{n}{<->cmr10}{}

\newtheorem{theorem}{Theorem}
\newtheorem{proposition}[theorem]{Proposition}
\newtheorem{lemma}[theorem]{Lemma}

\newcommand{\R}{\mathbb{R}}

\begin{document}

\title[Equilateral excess and dimension cost]{Equilateral sets in $\ell_p^n$ beyond $n+1$:\ Linear excess and dimension cost}
\author{Logan R. Chalmers}
\address{University of Otago, Dunedin 9016, New Zealand}
\email{logan.chalmers@postgrad.otago.ac.nz}
\author{Aran S. Ziegler}
\address{Auckland University of Technology, Auckland 1010, New Zealand}
\email{pvx6961@autuni.ac.nz}
\date{}
\subjclass[2020]{Primary 52A21; Secondary 46B20, 52C17}
\keywords{Equilateral sets, $\ell_p$ spaces, Kusner's conjecture, dimension thresholds}

\begin{abstract}
How many equilateral points can $\ell_p^n$ contain beyond $n+1$?
For every fixed real $p>4$, we construct equilateral sets for which
the number of points beyond $n+1$ grows at least linearly with $n$.
For every fixed
dimension, we prove that the maximum is $n+1$ at all sufficiently large
finite $p$, with an explicit threshold.  The least dimension admitting
any fixed number of extra points is $p^{1+o(1)}$ as $p\to\infty$.
The constructions and finite bounds are formalised in Lean.  Thus $p=4$
marks the boundary: $n+1$ is exact in every dimension for
$2\leq p\leq4$, while at every fixed finite $p>4$ the excess grows
at least linearly with dimension.
\end{abstract}

\maketitle

\section{Introduction}

How many points can an equilateral set in $\ell_p^n$ have as $p$ and
$n$ vary?  Write $e(\ell_p^n)$ for this maximum.  For every finite
$p>1$, one has $e(\ell_p^n)\geq n+1$.  In Guy's 1983 problem list,
Kusner conjectured that $e(\ell_p^n)=n+1$ for every $n\geq1$ and
$p>2$ \cite{Guy1983}.  Swanepoel established the case $p=4$;
Ge, Xu and Zhou later proved the full interval $2\leq p\leq4$
\cite{Swanepoel2004,GeXuZhou2026}.
Chalmers then disproved Kusner's conjecture as stated by Guy,
constructing $58$ equilateral points in $\ell_5^{56}$
\cite{Chalmers2026}.  The full $p>4$ range was subsequently covered
\cite{Xiong2026}.

The failure of the $n+1$ bound raises two questions.  At a fixed
$p>4$, how does the number of extra points grow with $n$?  At a fixed
dimension $n$, what happens as $p$ grows?

At every fixed $p>4$, the excess over $n+1$ grows at least linearly
with $n$.
\begin{theorem}[Linear excess]\label{thm:linear-excess}
 For every real $p>4$ there are constants $c_p>0$ and $n_p$ such that
 \[
 e(\ell_p^n)\geq(1+c_p)n \qquad\text{for every integer }n\geq n_p.
 \]
\end{theorem}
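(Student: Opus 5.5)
The plan is to combine a single ``seed'' configuration beating $n+1$ with a gluing construction that adds surplus points additively. Suppose $A\subset\ell_p^{a}$ is equilateral with $a+1+s$ points, $s\geq1$. Then we build, for all large $n$, equilateral sets in $\ell_p^{n}$ with roughly $n+ \lfloor n/a\rfloor s$ points, which gives $c_p\approx s/a$. The seed is supplied by the known counterexamples for $p>4$ \cite{Chalmers2026,Xiong2026}; I take them as giving, for each $p>4$, some dimension $a=a_p$ and an equilateral set in $\ell_p^{a}$ of size $a+2$. If a seed is needed explicitly, I would take the standard perturbation: a scaled simplex-type configuration $\{\lambda e_i\}$ in high dimension, together with extra points of the form $t\mathbf{1}+$(small corrections). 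Here $p>4$ is exactly what makes the fourth-order term in the expansion of $|x+\epsilon|^p$ have the right sign, so the correction equations become solvable by the implicit function theorem.

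The gluing step is the heart. Take $k$ copies $A_1,\dots,A_k$ of the seed, placed in orthogonal coordinate blocks of $\ell_p^{ka}$. Points in different blocks are at distance $(\|x\|_p^p+\|y\|_p^p)^{1/p}$, which is not the same as the in-block distance. So the seed is first normalized to be \emph{centered} in a suitable sense: each point of $A_j$ receives an extra coordinate, or a shared set of coordinates, chosen so that all cross-block distances equal the common in-block distance. Concretely, I would look for an equilateral set in which every point has the same $p$-norm $r$ and all pairwise distances equal $d$. Placing copies in disjoint blocks then gives cross distance $2^{1/p}r$. I would then rescale or shift each block by a common vector in $O(1)$ extra shared coordinates, tuning a parameter so that in-block distances (unchanged by a common shift) match the cross-block distances, where the shift enters differently. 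The key point is that this fix-up needs only a bounded number $m$ of extra dimensions, independent of $k$. We then get $k(a+1+s)$ points in dimension $ka+m$, and the excess over $n+1$ is about $k(s+1)-m-1$, which is linear in $n$. For $n$ not of the form $ka+m$, pad with zero coordinates; this only loses $O(1)$ points.

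The main obstacle is the matching step. For a fixed block configuration, the cross-block distance $2^{1/p}r$ is a single rigid number, and in $\ell_p$ a common translation does not preserve in-block distances. So the correct device is instead to shift different blocks by \emph{different} vectors in the shared coordinates while keeping each block's internal geometry. First I would try giving each block a common scalar value $u$ in one shared coordinate, with the shared coordinate taking value $\pm u$ alternately. Since this cannot handle $k>2$ blocks uniformly, the fallback is an implicit-function-theorem argument. Start from the $k$ orthogonal copies and perturb all coordinates slightly. Solve the system of distance equations, counting unknowns ($\sim kaN$) against equations ($\sim (k(a+2))^2/2$), using the nondegeneracy of the Jacobian that comes from $p>4$ as in the seed construction. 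If the quadratic count fails, one can instead use a sunflower-style arrangement: a simplex in $\ell_p^{n}$ whose $n+1$ vertices are each replaced by small seed clusters. Here the cluster points differ from their center only in private coordinates, and the distances adjust to first order, which reduces the problem to a linear solvability condition that I would verify numerically and then prove.
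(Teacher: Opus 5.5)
\textbf{The gluing step fails as proposed.} If each block is to keep its internal geometry, every point of copy $j$ must carry the same vector $\tau_j$ in the shared coordinates. For $x$ in copy $j$ and $y$ in copy $j'$ the distance-power is then $\|x\|_p^p+\|y\|_p^p+\|\tau_j-\tau_{j'}\|_p^p$. This is constant only if two things hold. First, the seed must be spherical, with $\|x\|_p^p=N$ for all its points. Second, $\tau_1,\dots,\tau_k$ must be an equilateral set in $\ell_p^m$ with distance-power $D-2N$, which requires $2N\leq D$ because tags can only increase cross distances.

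This makes your central claim false: $m$ cannot be bounded independently of $k$, since an equilateral set in an $m$-dimensional normed space has at most $2^m$ points. The fallbacks do not rescue it. The orthogonal copies violate the cross-block equations by an amount of order one ($D-2N$ for a spherical seed), so they are not a solution from which an implicit-function argument could start, and you never show any nondegeneracy. The sunflower arrangement cannot be equilateral at all: points in a small cluster are much closer to each other than to points of other clusters.

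The missing observation is that bounded $m$ is unnecessary. Take the tags to be $e_1,\dots,e_{k-1},-b(1,\dots,1)\in\R^{k-1}$ with $(1+b)^p+(k-2)b^p=2$, scaled to distance-power $D-2N$. A seed of $d+2$ points in $\ell_p^d$ then gives $k(d+2)$ points in dimension $k(d+1)-1$, an excess of exactly $k$. In your own count $k(s+1)-m-1$, the choice $m=k-1$ still gives $ks$.

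\textbf{The seed is not supplied.} The gluing needs a spherical seed with $2N<D$. Nothing in the counterexamples you cite guarantees either property, so citing them is not enough. The paper builds such a seed explicitly: $32m$ points in $\ell_p^{32m-2}$, assembled from Walsh matrices and sixteen spiked type vectors $q_i$. All distance conditions reduce to one scalar equation $F_{p,m}(t)=0$. For $p\geq6$ this is solved by explicit estimates showing a sign change on $(1,64^{1/p})$. For $4<p<6$ it is solved from the sign of $p\mapsto F_{p,\infty}(\sqrt8)$, whose only real zeros are $p=2,4,6$ by a Descartes-type rule for exponential sums. Sphericity is built into the construction, and $N<(23m-1)R/8<D/2$ is checked directly.

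Your perturbative alternative is only a sketch and does not address sphericity. Moreover, the mechanism you name does not locate the threshold. The fourth-order coefficient $\binom{p}{4}$ is positive for every $p>3$, whereas $e(\ell_p^n)=n+1$ throughout $2\leq p\leq4$.
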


Theorem~\ref{thm:linear-excess} follows by joining copies of a
spherical seed.  If $S\subset\ell_p^d$ has $d+2$ points of common
norm-power $N$ and pairwise distance-power $D>2N$, then
Lemma~\ref{lem:join} gives $r(d+2)$ equilateral points in
$\ell_p^{r(d+1)-1}$ for every $r\geq1$.  The excess is therefore $r$.
Proposition~\ref{prop:seed} constructs such a seed for every $p>4$.

At each fixed dimension, sufficiently large finite $p$ forces the
maximum back to $n+1$.  At $p=\infty$, the $2^n$ vertices of the cube
are equilateral.
\begin{theorem}[High-exponent exclusion]\label{thm:exclusion}
 If $n\geq2$ is an integer and
 $p\geq\max\{6,2n(\log n)^2\}$ is real, then
 \[
 e(\ell_p^n)=n+1.
 \]
\end{theorem}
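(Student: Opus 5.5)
We argue spectrally rather than combinatorially. Let $x_1,\dots,x_m\in\ell_p^n$ be equilateral with common distance $1$, and let $A$ be the $m\times m$ matrix with entries $A_{ij}=\|x_i-x_j\|_p^p$. Then
\[
A=J-I ,
\]
so $A$ has the eigenvalue $-1$ with multiplicity $m-1$. The plan is to write $A=M+E$, where $M$ has at most $n$ negative eigenvalues and $\|E\|_{\mathrm{op}}<1$. Weyl's inequality then gives $m-1\le n$. Together with the trivial bound $e(\ell_p^n)\ge n+1$, this proves Theorem~\ref{thm:exclusion}.

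\textbf{Step 1: localisation.} For each pair $i\neq j$ we have $\sum_k|x_{ik}-x_{jk}|^p=1$, so some coordinate satisfies $|x_{ik}-x_{jk}|\ge n^{-1/p}$. Under $p\ge 2n(\log n)^2$ this gives
\[
|x_{ik}-x_{jk}|\ge 1-\varepsilon, \qquad \varepsilon\approx \frac{\log n}{p}\le \frac{1}{2n\log n}.
\]
Every coordinate range $\{x_{ik}\}_i$ lies in an interval of length at most $1$. After translating, write $x_{ik}=1-a_{ik}$ for points in the top $\varepsilon$-window of coordinate $k$, and $x_{ik}=b_{ik}$ for points in the bottom window. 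Let $U_k$ and $V_k$ be the disjoint sets of top and bottom points. Coordinate $k$ contributes a non-negligible amount to $A_{ij}$ only when $i\in U_k$ and $j\in V_k$, or the reverse. All other contributions are at most $(1-\varepsilon)^p$ in size, and these go into $E$.

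\textbf{Step 2: factorisation.} For $i\in U_k$ and $j\in V_k$ we use
\[
(1-a_{ik}-b_{jk})^p\approx e^{-pa_{ik}}e^{-pb_{jk}}.
\]
Put $(u_k)_i=e^{-pa_{ik}}$ on $U_k$ and $(v_k)_j=e^{-pb_{jk}}$ on $V_k$, both zero elsewhere. Then
\[
M=\sum_{k=1}^n\bigl(u_kv_k^{T}+v_ku_k^{T}\bigr).
\]
Each summand is a symmetric matrix of rank at most $2$ with one positive and one negative eigenvalue. Hence $M$ has at most $n$ negative eigenvalues, which is exactly the signature count needed.

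\textbf{Step 3: the error.} This is the main obstacle. We must show that
\[
\|E\|_{\mathrm{op}}\le m\max_{i,j}|E_{ij}|<1 .
\]
Two sources contribute to $E_{ij}$:
\begin{itemize}
\item the factorisation error, of relative size about $p(a+b)^2\lesssim (\log n)^2/p\le 1/(2n)$;
\item the off-window contributions, which are at most $n(1-\varepsilon)^p$.
\end{itemize}
Both are fine provided $m=O(n)$ a priori. So the plan is to first establish a crude linear bound on $m$ in this regime. We would try a first pass of the same argument with the weaker conclusion $m\le 2n+1$: use the rank of $M$ (at most $2n$) against the rank $m$ of $J-I$, with a Gershgorin-type estimate that iterates from the trivial $m\le 2^n$ by replacing the crude operator bound with a rowwise bound exploiting the disjointness of the $U_k$ and $V_k$. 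If the bootstrapping is delicate, we would instead split $E$ into its factorisation part and its off-window part and estimate each via Schur's test. The hypothesis $p\ge 6$ should only be needed to handle small $n$, where $\log n$ is tiny and the window estimates must be checked directly.
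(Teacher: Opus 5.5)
Your inertia framework is sound and is essentially the paper's. Its $K=PQ^{\mathsf T}+QP^{\mathsf T}$ plays the role of your $M$, and its vector $c\neq0$ with $P^{\mathsf T}c=0$ realizes the same negative-index count. The gap is Step~3. It is not the a priori size of $m$: that is a non-issue, since any larger equilateral set contains one with exactly $n+2$ points.

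The real problem is that one window parameter controls your two error sources in opposite directions. As a result, $\|E\|_{\mathrm{op}}\le m\max|E_{ij}|$ cannot close at $p=2n(\log n)^2$, even for $m=n+2$.
With the $\varepsilon$ of Step~1 you have $(1-\varepsilon)^p=1/n$, so your off-window bound $n(1-\varepsilon)^p$ equals $1$. Suppose you enlarge the window to $(1-\varepsilon)^p=n^{-\lambda}$. The off-window part drops to $n^{1-\lambda}$ per entry. But the relative error of $e^{-pa}e^{-pb}$ against $(1-a-b)^p$ grows to about $p\varepsilon^2/2\approx\lambda^2(\log n)^2/(2p)\le\lambda^2/(4n)$. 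A row-sum bound over the $n+1$ off-diagonal entries then needs both $\lambda>2$ and $\lambda<2$. Your entrywise bounds are of order $1/n$ at this threshold, so a factor $n+1$ in front of them is unaffordable. The proposal offers no concrete mechanism (bootstrapping, Schur weights) that removes it.

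The paper supplies two ideas your plan lacks.

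First, it uses a window-free factorization with one-sided error. After translating each coordinate into $[0,1]$, set $P_{i\ell}=x_{i\ell}^p$ and $Q_{i\ell}=(1-x_{i\ell})^p$. For $0\le x\le y\le1$, with $d=y-x$ and $t=x(1-y)$, one has $x^p(1-y)^p+(1-x)^py^p-d^p=(d+t)^p-d^p+t^p\geq0$. So $E=K-(J-I)$ is entrywise nonnegative, with off-diagonal entries at most $\eta\approx3/(2n)$ and a negligible diagonal. Its main term $(1-a)^p(1-b)^p$ has relative error about $pab\le p(a+b)^2/4$, half of yours.

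Second, instead of a full-space norm bound, the paper evaluates the quadratic form only at $c\neq0$ with $P^{\mathsf T}c=0$ and $\sum_ic_i=0$. That is, it works inside the $(-1)$-eigenspace $\mathbf 1^\perp$ of $J-I$, where $c^{\mathsf T}Ec=\|c\|_2^2$. Nonnegativity of $E$ lets you discard the opposite-sign cross terms, and $\sum_ic_i=0$ makes the same-sign coefficients sum to $2s^2-\|c\|_2^2\le\frac n2\|c\|_2^2$. This gives $\frac n2\eta+\delta\le\frac9{10}$ for $n\geq3$. By contrast, the best full-space bound on this $E$, its maximal row sum $(n+1)\eta+\delta$, is about $\frac32$.

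In your Weyl language, the repair is Courant--Fischer restricted to $\mathbf 1^\perp$, together with a sign-definite perturbation. A two-signed $E$ controlled only through $|E_{ij}|$ does not suffice.
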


\newpage
The first two theorems bound the dimension needed to retain extra
points as $p$ grows.  For an integer $k\geq1$, define
\[
 D_k(p):=\min\bigl\{n\geq1:e(\ell_p^n)\geq n+1+k\bigr\}.
\]
The construction in Theorem~\ref{thm:linear-excess} makes $D_k(p)$
finite for $p>4$ and supplies an upper bound.  Theorem~\ref{thm:exclusion}
supplies a lower bound when $p$ is large.
\begin{theorem}[Dimension cost of excess]\label{thm:dimension}
 For every integer $k\geq1$ and real $p\geq6$,
 \[
 \frac{p}{2(\log p)^2}<D_k(p)\leq\frac{256}{9}kp.
 \]
 Consequently, $D_k(p)=p^{1+o(1)}$ as $p\to\infty$ for each fixed $k$.
\end{theorem}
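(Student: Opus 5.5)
The lower bound comes from Theorem~\ref{thm:exclusion}, and the upper bound from the join construction behind Theorem~\ref{thm:linear-excess}, with the seed dimension made explicit.

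\emph{Lower bound.} Suppose $n\leq p/(2(\log p)^2)$ with $p\geq 6$. We want $e(\ell_p^n)=n+1$, since then $n<D_k(p)$.

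If $n=1$, every equilateral set in a line has at most two points, so $e(\ell_p^1)=2$.

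If $n\geq2$, it suffices to check $p\geq 2n(\log n)^2$, after which Theorem~\ref{thm:exclusion} applies. From $n\leq p/(2(\log p)^2)$ we get $n<p$. Since $t\mapsto (\log t)^2$ is increasing for $t>1$, this gives $(\log n)^2<(\log p)^2$. Hence
\[
2n(\log n)^2 < 2n(\log p)^2 \leq p.
\]
So every $n\leq p/(2(\log p)^2)$ satisfies $e(\ell_p^n)=n+1$, and therefore $D_k(p)>p/(2(\log p)^2)$.

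\emph{Upper bound.} By Lemma~\ref{lem:join}, a seed in $\ell_p^d$ with $d+2$ points of common norm-power $N$ and distance-power $D>2N$ yields, for every $r\geq1$, $r(d+2)$ equilateral points in $\ell_p^{r(d+1)-1}$. That is an excess of $r$ in dimension $r(d+1)-1$.

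Take $r=k$. We may assume $e(\ell_p^m)\geq m+1+k$ implies the same in any larger dimension, by padding with zero coordinates. It then suffices to have a seed with
\[
k(d+1)-1\leq \tfrac{256}{9}kp .
\]
This holds provided $d+1\leq \tfrac{256}{9}p$, and that is what I must extract from Proposition~\ref{prop:seed}.

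This is the main obstacle: the statement of Proposition~\ref{prop:seed} as summarised gives existence for each $p>4$, but not the dimension bound. I would revisit its construction. Presumably the seed is built from a block of $m$ coordinates tuned against $p$, such as a simplex-like configuration or a ``two-level'' vector construction, where the condition $D>2N$ forces $m$ to be proportional to $p$ once $p\geq 6$. The constant $256/9=(16/3)^2$ suggests optimising a quadratic inequality in a parameter, possibly using a bound like $(1+x)^p\geq 1+px$.

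Concretely, I would write $N$ and $D$ for the seed explicitly as functions of $d$ and $p$. I would then verify $D>2N$ whenever $d\geq c\,p$ for the explicit $c$ achieving $d+1\leq \frac{256}{9}p$ for all $p\geq6$, handling small $p$ near $6$ by direct check.

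\emph{Asymptotics.} For fixed $k$, both bounds are of the form $p^{1+o(1)}$. The lower bound satisfies $\log\bigl(p/(2(\log p)^2)\bigr)=\log p - O(\log\log p)$, and the upper bound is linear in $p$. Hence $\log D_k(p)/\log p\to1$, which is the statement $D_k(p)=p^{1+o(1)}$.
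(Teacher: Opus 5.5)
\textbf{Lower bound and asymptotics.} These are correct. They are the paper's argument in direct rather than contrapositive form: the paper sets $n=D_k(p)\geq2$ and uses Theorem~\ref{thm:exclusion} to get $p<2n(\log n)^2$, with the same split $n\leq p$ versus $n>p$.

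\textbf{Gap in the upper bound.} You correctly reduce to finding a seed with $M=d+2$ points and $d+1\leq\frac{256}{9}p$. But you then leave this as an unproved ``main obstacle'' and only speculate about where it comes from. The missing fact is already in the statement of Proposition~\ref{prop:seed}:
\begin{itemize}
\item the seed has $32m$ points in $d=32m-2$ coordinates;
\item for $p\geq6$ one may take $m$ to be the least power of two strictly greater than $4p/9$.
\end{itemize}
Since $4p/9>1$, that power of two is at most $8p/9$, as recorded in Lemma~\ref{lem:high-root}. Joining $k$ copies then gives $32mk=n+1+k$ points in dimension
\[
n=k(32m-1)-1\leq k\bigl(\tfrac{256}{9}p-1\bigr)-1<\tfrac{256}{9}kp.
\]
So the constant is $256/9=32\cdot\frac89$. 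It does not arise as $(16/3)^2$ from optimising a quadratic.

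Your guessed mechanism is also wrong. In the paper, $2N<D$ holds for every $m$ once $p>4$, since $N<\frac{23m-1}{8}R<2^{p-1}mR$. What forces $m$ to grow like $p$ is solvability of the scalar equation $F_{p,m}(t)=0$. Lemma~\ref{lem:high-root} obtains this from $1/m<9/(4p)<\varepsilon$.

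\textbf{Minor point.} Your remark that $e(\ell_p^m)\geq m+1+k$ carries over to larger dimensions by zero-padding is false. Padding adds coordinates but no points, so each extra coordinate lowers the excess by one. The remark is also unnecessary: $D_k(p)$ is a minimum, so a single dimension with excess $k$ suffices.
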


Our seed has $32m$ points in $32m-2$ coordinates.  For $p\geq6$ we can
take $m\leq8p/9$.
Theorem~\ref{thm:exclusion} uses a rank obstruction for an approximate
distance matrix.
An earlier estimate gave
$e(\ell_p^n)\leq2(p+1)n$ for $p\geq c(n\log n)^2$ with an absolute
constant $c>0$ \cite{ChenGuiTangXiong2022}.

Swanepoel proved linear excess for $1<p<2$ \cite{Swanepoel2004}.
Together with Theorem~\ref{thm:linear-excess}, this gives linear excess
for every finite $p\in(1,2)\cup(4,\infty)$; for $2\leq p\leq4$, the
maximum is $n+1$.  The exact values of $e(\ell_p^n)$ for general pairs
$(p,n)$ and the exact first dimensions $D_k(p)$ remain open.

\section{Construction and linear excess}

For $x\in\R^d$ write $\|x\|_p^p=\sum_{j=1}^d|x_j|^p$.
Joining $r$ copies of a spherical seed with $32m$ points in $32m-2$
coordinates gives $32mr$ points in $r(32m-1)-1$ coordinates,
an excess of $r$ over dimension plus one.  The joining lemma is
related to \cite[Proposition~1]{Swanepoel2004}.

\begin{lemma}[Spherical amplification]\label{lem:join}
 Let $p>1$ and let $S\subset\ell_p^d$ have $M$ points, common
 distance-power $D>0$, and common norm-power $N$ with $2N<D$.
 For every integer $r\geq1$ there is an equilateral set of $rM$
 points in $\ell_p^{rd+r-1}$ with distance-power $D$.
\end{lemma}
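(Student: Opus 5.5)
We place $r$ copies of $S$ in disjoint blocks of coordinates and use $r-1$ extra coordinates to balance the cross distances. Write $S=\{s_1,\dots,s_M\}\subset\R^d$, so $\|s_i\|_p^p=N$ and $\|s_i-s_j\|_p^p=D$ for $i\neq j$. In $\R^{rd}\oplus\R^{r-1}$, give the $a$-th copy ($a=1,\dots,r$) the points
\[
 x_{a,i}=(0,\dots,0,s_i,0,\dots,0;\,v_a),
\]
with $s_i$ in the $a$-th block of $d$ coordinates and $v_a\in\R^{r-1}$ to be chosen. Points in the same copy differ only in block $a$, so their distance-power is exactly $D$. For $a\neq b$, the difference $x_{a,i}-x_{b,j}$ has $s_i$ in block $a$, $-s_j$ in block $b$, and $v_a-v_b$ in the tail. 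Hence
\[
 \|x_{a,i}-x_{b,j}\|_p^p=2N+\|v_a-v_b\|_p^p,
\]
which does not depend on $i,j$. The lemma therefore reduces to one requirement: find $v_1,\dots,v_r\in\R^{r-1}$ with $\|v_a-v_b\|_p^p=D-2N>0$ for all $a\neq b$. That is, we need an equilateral set of $r$ points in $\ell_p^{r-1}$ with prescribed distance-power $D-2N$.

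By scaling, any equilateral $r$-point set in $\ell_p^{r-1}$ will do. The paper states that $e(\ell_p^n)\geq n+1$ for every finite $p>1$, which is exactly this standard fact. I would nonetheless verify it, or cite it, rather than lean on the introduction.

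A concrete construction uses $r-1$ scaled basis vectors $\lambda e_1,\dots,\lambda e_{r-1}$ together with the point $(t,\dots,t)$. Their mutual distance-power is $2\lambda^p$. The distance-power from $(t,\dots,t)$ to $\lambda e_k$ is
\[
 \phi(t)=|\lambda-t|^p+(r-2)|t|^p.
\]
At $t=0$ this equals $\lambda^p<2\lambda^p$, and $\phi(t)\to\infty$ as $t\to\infty$. By the intermediate value theorem some $t$ gives $\phi(t)=2\lambda^p$. We then scale so that $2\lambda^p=D-2N$. The case $r=1$ is trivial: take $S$ itself, since $rd+r-1=d$.

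The main obstacle is this step, producing the $r$-point equilateral set in dimension $r-1$ for general real $p>1$. With the continuity argument above it is routine. The rest is bookkeeping: the total dimension is $rd+(r-1)$, the number of points is $rM$, and the points are distinct because $D>0$. The hypothesis $2N<D$ enters only to make the required tail distance-power $D-2N$ positive.
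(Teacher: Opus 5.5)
Your proposal is correct and follows the paper's proof. Both place $r$ copies of $S$ in disjoint $d$-coordinate blocks, tagged by an $r$-point equilateral set in $\ell_p^{r-1}$ scaled to distance-power $D-2N$, so that cross-copy distances are $N+N+(D-2N)=D$. The only cosmetic difference is the extra tag point: the paper takes $-b(1,\ldots,1)$ with $b\in(0,1)$ solving $(1+b)^p+(r-2)b^p=2$ by monotonicity, whereas you take $(t,\ldots,t)$ with $t>0$ from the intermediate value theorem, and both choices work.
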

\begin{proof}
 The case $r=1$ is $S$.  For $r\geq2$, let $b\in(0,1)$ solve
 \[
 (1+b)^p+(r-2)b^p=2.
 \]
 The left-hand side is strictly increasing, starts at $1$, and
 exceeds $2$ at $b=1$.  Thus the points
 \[
 e_1,\ldots,e_{r-1},-b(1,\ldots,1)\in\R^{r-1}
 \]
 have common distance-power $2$.  Scale these tags to distance-power
 $D-2N$.  Place $r$ copies of $S$ in disjoint $d$-coordinate blocks,
 appending the corresponding tag to each point of a copy.  Distances
 within a copy retain power $D$; cross-copy distances have power
 $N+N+(D-2N)=D$.
\end{proof}

To construct a seed for Lemma~\ref{lem:join}, define sixteen type
vectors from a Walsh matrix.
Index the rows and columns of the symmetric $8$-by-$8$ Walsh matrix
$H_8$ by $\mathbb F_2^3$, with
$(H_8)_{ij}=(-1)^{i\cdot j}$.  For $t>1$ define sixteen vectors in
$\R^{16}$ by
\[
 q_i=((H_8)_{i,*},t e_i),\qquad
 q_{8+i}=(t e_i,-(H_8)_{*,i}),\qquad i\in\mathbb F_2^3.
\]
Since $H_8$ is symmetric, either row or column notation may be used
in the second expression.  Put
\begin{align}
 R&=8+t^p,&
 A&=4\cdot2^p+2t^p,&
 B&=(t+1)^p+(t-1)^p+14, \label{eq:RAB}\\
 L&=2^{p-1}R,&
 T&=(1+m^{-1})L,&
 F_{p,m}(t)&=T+7A-8B. \label{eq:F}
\end{align}
Here and below $m$ is a power of two.  Each $q_i$ has norm-power
$R$.  Moreover, for distinct $i,j$ and any signs $a,b\in\{-1,1\}$,
\begin{equation}\label{eq:front}
 \|a q_i-b q_j\|_p^p=
 \begin{cases}
 A,&i,j\text{ belong to the same half of }\{0,\ldots,15\},\\
 B,&i,j\text{ belong to different halves.}
 \end{cases}
\end{equation}
Within one half, two signed Walsh rows or columns differ in exactly
four places, and the two spikes contribute $2t^p$.  Across halves,
fourteen coordinates contribute $1$ each; the two remaining terms
are $(t+1)^p$ and $(t-1)^p$.  This proves
\eqref{eq:front} for both sign choices.

\begin{proposition}[Spherical seed above four]\label{prop:seed}
 For every real $p>4$ there is a power of two $m$ and an
 equilateral set of $32m$ points in $\ell_p^{32m-2}$.
 Its points have common norm-power $N$, its pairwise distances have
 common power $D$, and $2N<D$.  For $p\geq6$ one may take $m$ to be
 the least power of two strictly greater than $4p/9$.
\end{proposition}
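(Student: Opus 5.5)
The plan is to build the seed as a signed tensor product of a Sylvester Hadamard matrix $H_m$ with the sixteen type vectors $q_0,\dots,q_{15}$. I then tune the free parameter $t$ so that all distance-powers coincide, and finally strip away redundant coordinates to reach dimension $32m-2$. Concretely, for a row $h_k$ of $H_m$ ($k=1,\dots,m$), a type $i\in\{0,\dots,15\}$ and a sign $a\in\{\pm1\}$, I consider the point $a\,h_k\otimes q_i\in\R^{16m}$. This gives $2\cdot16\cdot m=32m$ points, all of common norm-power $mR$.

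The first step is to tabulate the distance-powers using \eqref{eq:front}. Since distinct Sylvester rows agree in exactly $m/2$ places, there are four cases.
\begin{itemize}
\item Same $k$, different types: $mA$ or $mB$, according to whether the types lie in the same half.
\item Same $k$ and type, opposite signs: $m2^pR=2mL$.
\item Different $k$, same type and sign: $\tfrac m2 2^pR=mL$.
\item Different $k$ and different types: $\tfrac m2$ blocks of each sign pattern, which \eqref{eq:front} collapses to $mA$ or $mB$ again.
\end{itemize}
So there are only a handful of distinct distance-powers. They are indexed by whether the types share a half, whether the rows coincide, and whether the pair is antipodal.

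The second step removes these discrepancies by appending low-dimensional ``tag'' coordinates, in the spirit of the tags in the proof of Lemma~\ref{lem:join}. The tags are a half-indicator and an antipodality correction, chosen so that after tagging every class of pairs has the same power. The bookkeeping of which deficits must be added should reduce to a single scalar balance. The weights $1,7,8$ count the type partners: each type has one ``self'' partner, $7$ same-half partners and $8$ other-half partners. The factor $(1+m^{-1})$ comes from averaging the self/antipodal contribution over the $m$ rows. So I expect the equilibrium condition to be exactly $F_{p,m}(t)=0$ from \eqref{eq:F}.

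I would then solve $F_{p,m}(t)=0$ by the intermediate value theorem in $t>1$. For large $t$ the leading term is $t^p\bigl(2^{p-1}(1+m^{-1})-2\bigr)$, which is positive. For a suitable moderate $t$, the term $8(t+1)^p$ should dominate $T+7A$ once $m$ is large, and this is where $p>4$ enters: roughly one needs $(1+1/t)^p\cdot 2^{4}>2^{p}(1+m^{-1})\cdot\tfrac{t^p}{(t+1)^p}$ up to the lower-order $A$-terms. The resulting sign change gives the root. The inequality $2N<D$ then follows because the antipodal pairs already have power $2mL=m2^pR$, about $2^p$ times the norm-power $N$, and the tags only increase $D$.

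The dimension count should follow because the tagged configuration lies in an affine subspace of codimension two: the signed construction has a linear relation from $\sum_k h_k$, and the half-tags add one more. Projecting onto that subspace in suitable coordinates keeps the $\ell_p$ distances, since the removed coordinates are constant or duplicated. I expect the main obstacle to be the explicit estimate for $p\geq6$. There I would evaluate $F_{p,m}$ at an explicit $t$ of size about $p$, bound $(1+1/t)^p$ below by $e^{p/(t+1)}$, and check that $m>4p/9$ makes $F_{p,m}$ negative there. If that is too tight, I would try $t$ chosen as a fixed multiple of $p$ and optimise the constant.
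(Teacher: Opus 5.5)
Your front block $\sigma\,h_s\otimes q_i$, its four distance classes, and the target equation $F_{p,m}(t)=0$ all match the paper. The gap is that the tags, which carry the proposition, are never constructed, and the version you sketch cannot work. Tags only add distance, and the antipodal pairs already realise the largest power $D=2^pmR$. So the tags should not depend on $\sigma$, since an ``antipodality correction'' can only enlarge the class that is already largest; they must instead lift every other class up to $D$.

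Pairs of the same type in different rows need an extra $2^{p-1}mR$. So for each type the tags must form an $m$-point equilateral set indexed by the rows, which forces the tag dimension to grow with $m$. This is why your count ($16m$ plus a few coordinates, minus two) cannot reach $32m-2$. The missing $16(m-1)$ coordinates in the paper are $\alpha\chi_i(c)H_m(s,u)$ with $c\in\mathbb F_2^4$, $u\neq0$, $\alpha^p=R/16$. They give exactly $2^{p-1}mR$ to same-type pairs. Since $\chi_{i+j}$ is balanced for $i\ne j$, they give exactly $2^{p-1}(m-1)R$ to every different-type pair, whatever the rows. This, not an averaging over rows, is the source of $T=(1+m^{-1})L$.

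The half dependence comes from the $14$ coordinates $\beta\chi_i(c)$, $c\in\mathbb F_2^4\setminus\{0,8\}$, with $\beta^p=mW/2^p$. Eight of these differ for same-half pairs and seven for different-half pairs. The balance is therefore $A+8W=B+7W=T$, which is $F_{p,m}(t)=0$ and forces $B=A+W>A$. So the half-indicator $c=8$ is \emph{omitted}, not added: adding it would push the wrong class. The $-2$ in the dimension comes from never including $c=0$ (constant) and $c=8$. It does not come from projecting onto a codimension-two affine subspace, which does not preserve $\ell_p$ distances for $p\neq2$. Finally, $2N<D$ is not automatic. The tags leave $D$ unchanged but raise $N$ to $(2m-1)R+14mW/2^p$, which the paper bounds by $\frac{23m-1}{8}R<2^{p-1}mR$ using $W<T/8$ and $p>4$.

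Your root-finding plan for $p\geq6$ looks in the wrong place. For $t\geq p$ one has $(1+1/t)^p\leq e$, and dropping positive terms gives $F_{p,m}(t)>t^p\bigl(2^{p-1}+6-8e\bigr)>0$. More generally, for $t$ bounded away from $1$ one has $\bigl((t+1)/(2t)\bigr)^p\to0$, so $8(t+1)^p$ is swamped by $2^{p-1}t^p$ and $F_{p,m}(t)>0$ for large $p$. Hence a fixed multiple of $p$ fails too. The negative values occur only at $t=1+O(1/p)$. The paper uses $F_{p,m}(1)=49(2^{p-1}-2)+9\cdot2^{p-1}/m>0$ and shows $F_{p,m}(64^{1/p})=L(1/m-\varepsilon)$ with $p\varepsilon>9/4$; this is exactly where $m>4p/9$ comes from.

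For $4<p<6$, a dominance heuristic cannot explain the threshold, because $F_{4,m}(t)=6(t^2-8)^2+L/m>0$ has no root at all. The paper uses the exponential zero-count lemma to show $F_{p,\infty}(\sqrt8)<0$ on $(4,6)$, and only then takes $m$ large.
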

\begin{proof}
 We first give the coordinates for any $m$ and $t>1$ satisfying
 $F_{p,m}(t)=0$; the existence of these parameters is proved in
 Lemmas~\ref{lem:high-root} and \ref{lem:low-root} below.
 Set
 \[
 W=\frac{T-A}{8},\qquad
 \alpha^p=\frac{R}{16},\qquad
 \beta^p=\frac{mW}{2^p},
 \]
 choosing the positive roots.  These choices are valid because
 \[
 T-A=(2^{p-1}-2)t^p+\frac{L}{m}>0.
 \]
 The equation $F_{p,m}(t)=0$ gives
 \begin{equation}\label{eq:balance}
 A+8W=B+7W=T.
 \end{equation}

 Write $m=2^d$, index the rows and columns of $H_m$ by
 $\mathbb F_2^d$, and set
 $H_m(s,r)=(-1)^{s\cdot r}$.  Identify the type indices
 $0,\ldots,15$ with $\mathbb F_2^4$ and write
 $\chi_i(c)=(-1)^{i\cdot c}$.  Let
 $C=\mathbb F_2^4\setminus\{0,8\}$, where $8$ denotes the first
 coordinate character.  For each
 $(\sigma,i,s)\in\{-1,1\}\times\mathbb F_2^4\times\mathbb F_2^d$
 form a point $x_{\sigma,i,s}$ by concatenating
 \begin{align*}
 &\sigma H_m(s,r)q_i[a]
     &&(r\in\mathbb F_2^d,\ a=0,\ldots,15),\\
 &\alpha\chi_i(c)H_m(s,u)
     &&(c\in\mathbb F_2^4,\ u\in\mathbb F_2^d\setminus\{0\}),\\
 &\beta\chi_i(c)
     &&(c\in C).
 \end{align*}
 There are $32m$ labels and
 $16m+16(m-1)+14=32m-2$ coordinates.

 Put $D=2^pmR$.  If $i,s$ agree and only $\sigma$ changes, the
 front block contributes $D$.  If $i$ agrees and $s$ changes, the
 balance of distinct Walsh rows gives $2^{p-1}mR$ from the front
 block and the same amount from the ordinary character block.
 If $i\ne j$, the front contributes $mA$ or $mB$ according to
 the halves containing $i,j$.  The ordinary block contributes
 $2^{p-1}(m-1)R$: for each nonzero Walsh column, exactly eight of
 the sixteen character signs differ.  Of the fourteen retained
 characters, eight differ when $i,j$ are in the same half and
 seven differ when they are in different halves.  Thus the
 retained contribution is $8mW$ or $7mW$, respectively.
 Equation~\eqref{eq:balance} makes the total in either case
 \[
 mT+2^{p-1}(m-1)R=2^pmR=D.
 \]
 Since $D>0$, the labels give distinct points.

 Every point has norm-power
 \[
 N=mR+16(m-1)\alpha^p+14\beta^p
  =(2m-1)R+\frac{14mW}{2^p}.
 \]
 As $A>0$, one has $W<T/8$, so
 \[
 N<\frac{23m-1}{8}R
   <2^{p-1}mR=\frac D2
 \]
 for $p>4$.  The root lemmas complete the proof.
\end{proof}

\begin{lemma}[Scalar root for $p\geq6$]\label{lem:high-root}
 If $p\geq6$ and $m$ is the least power of two strictly greater
 than $4p/9$, the equation $F_{p,m}(t)=0$ has a root
 $t\in(1,64^{1/p})$, and $m\leq8p/9$.
\end{lemma}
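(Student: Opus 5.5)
The plan is to apply the intermediate value theorem to the continuous function $t\mapsto F_{p,m}(t)$ on $[1,64^{1/p}]$. I will show $F_{p,m}(1)>0$ and $F_{p,m}(64^{1/p})<0$. The bound on $m$ is immediate: the least power of two strictly greater than $x=4p/9$ is at most $2x=8p/9$, since some power of two lies in $(x,2x]$. Conversely, $m>4p/9$ gives $1/m<9/(4p)$, and this inequality is the only information about $m$ that the root estimate uses.

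\emph{Left endpoint.} At $t=1$ we have $R=9$, $A=4\cdot 2^p+2$ and $B=2^p+14$. Substituting,
\[
F_{p,m}(1)=2^{p-1}\Bigl(49+\frac9m\Bigr)-98 .
\]
This is positive for every $p\ge 6$, indeed already for $p\geq 2$.

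\emph{Right endpoint.} Put $t=64^{1/p}$, so $t^p=64$. Then $T=36(1+m^{-1})2^p$ and $7A=28\cdot 2^p+896$. Dropping $(t-1)^p\ge0$ from $B$ gives
\[
F_{p,m}(t)\le 2^p\Bigl(64+\frac{36}{m}\Bigr)+784-8(t+1)^p .
\]
So it suffices to prove
\[
\Bigl(\frac{1+t}{2}\Bigr)^p>8+\frac{9}{2m}+\frac{98}{2^p}.
\]
By AM--GM, $(1+t)/2\ge\sqrt t$, and $(\sqrt t)^p=8$, so we need a quantitative excess over AM--GM. Using $(1+t)/2=\sqrt t+(\sqrt t-1)^2/2$ and Bernoulli's inequality,
\[
\Bigl(\frac{1+t}{2}\Bigr)^p\ge 8\Bigl(1+\frac{p(\sqrt t-1)^2}{2\sqrt t}\Bigr).
\]
Together with $\sqrt t-1\ge (\ln 8)/p$, this gives an excess of at least $4(\ln 8)^2/(p\sqrt t)\approx 17.3/(p\sqrt t)$. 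The requirement is $9/(2m)<81/(8p)\approx 10.1/p$ plus the exponentially small term $98/2^p$.

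\emph{Main obstacle.} This step is where I expect difficulty. For large $p$ the inequality is comfortable, because $\sqrt t\to1$ and $98/2^p$ is negligible. For moderate $p$ near $6$, however, the crude Bernoulli bound is too weak. At $p=6$ it yields roughly $2.0$ against a requirement of about $1.1+1.5$. For that range I would stop discarding the $(t-1)^p$ term and use the full binomial expansion of $((1+t)/2)^p$ rather than Bernoulli. I would also use the exact value of $m$ on each interval where it is constant: $m=4$ for $6\le p<9$, $m=8$ for $9\le p<18$, and so on. The plan is to show that an analytic bound with second-order terms, e.g.\ $(1+u)^p\ge 1+pu+\binom p2u^2$, settles all $p\ge p_0$ for some explicit modest $p_0$. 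The finitely many remaining $m$-intervals in $[6,p_0]$ would then be checked by monotonicity in $p$ of the relevant one-variable expressions, in the spirit of the Lean-verified finite bounds mentioned in the abstract.
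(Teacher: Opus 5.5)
Your setup is correct and matches the paper's. Your value $F_{p,m}(1)=2^{p-1}(49+9/m)-98$ agrees with the paper's. At $t_0=64^{1/p}$, your reduction
\[
\Bigl(\frac{1+t_0}{2}\Bigr)^p>8+\frac{9}{2m}+\frac{98}{2^p}
\]
is exactly the paper's condition $\varepsilon>1/m$ with the $(t_0-1)^p$ term discarded. Here $((1+t_0)/2)^p=8\cosh(y)^p$ with $y=(\ln 8)/p$.

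\textbf{The gap.} This inequality is the whole substance of the lemma, and you never prove it. You show that your bound fails near $p=6$, and you give no explicit threshold for the ``comfortable'' large-$p$ regime. The hard range is left to a plan (second-order terms down to an unspecified $p_0$, then interval checks) that is not carried out. Those checks are also less routine than you suggest. Both sides decrease in $p$. On $[6,9)$, where $m=4$, the left side at $p=9$ (about $10.15$) is already below the right side at $p=6$ (about $10.66$), so a single crossed endpoint comparison does not suffice.

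\textbf{Where the loss comes from.} The damage is done by the factor $1/\sqrt{t_0}$, not by Bernoulli itself. Since $\sqrt{t_0}=e^{y}$, your quantity $(\sqrt{t_0}-1)^2/(2\sqrt{t_0})$ equals $\cosh y-1\geq y^2/2$. So the excess is at least $4(\ln 8)^2/p$, a gain of a factor $\sqrt2$ at $p=6$. Using only $1/m<9/(4p)$, this sharper bound already works for every $p$ outside a short range just above $6$.

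\textbf{How the paper closes it.} The paper handles that range uniformly by also keeping the $y^4/24$ term of $\cosh y$ and the quadratic Bernoulli term. This gives
\[
p\varepsilon\geq\frac{2a^2}{9}+\frac{a^4(3p-2)}{216p^2}-\frac{196p}{9\cdot 2^p},\qquad a=6\ln 2>\frac{112}{27}.
\]
On $[6,8]$ the paper evaluates the middle term at $p=8$ and the last term at $p=6$, obtaining about $2.2533>9/4$. For $p\geq 8$ it drops the middle term. No case split on $m$ is needed.

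\textbf{An alternative.} Your first-order route also closes once the $\sqrt{t_0}$ loss is removed and you use the exact value of $m$ near $p=6$. On $[6,9)$, where $m=4$, one checks $4(\ln 8)^2/p>9/8+98/2^p$; the difference is unimodal there, so the endpoints suffice. For $p\geq 9$ one checks $4(\ln 8)^2/p>81/(8p)+98/2^p$.
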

\begin{proof}
 Direct substitution gives
 \[
 F_{p,m}(1)
 =49(2^{p-1}-2)+\frac{9\,2^{p-1}}{m}>0.
 \]
 Set $t_0=64^{1/p}$, $a=\log64$ and $y=a/(2p)$.
 With $F_{p,\infty}=L+7A-8B$, write
 $\varepsilon=-F_{p,\infty}(t_0)/L(t_0)$.
 Expanding \eqref{eq:RAB} at $t_0$ yields
 \[
 \varepsilon=
 \frac{16}{9}\bigl(\cosh(y)^p-1\bigr)
 +\frac29\left(\frac{t_0-1}{2}\right)^p
 -\frac{196}{9\cdot2^p}.
 \]
 Using $\cosh y\geq1+y^2/2+y^4/24$ and
 $(1+z)^p\geq1+pz+p(p-1)z^2/2$ for $z\geq0$ gives
 \begin{equation}\label{eq:epsilon}
 p\varepsilon\geq\frac{2a^2}{9}
 +\frac{a^4(3p-2)}{216p^2}
 -\frac{196p}{9\cdot2^p}.
 \end{equation}
 The bound $a>112/27$ follows from
 \[
 \log2=2\int_0^{1/3}\frac{dz}{1-z^2}
 >2\int_0^{1/3}(1+z^2)\,dz=\frac{56}{81}.
 \]
 On $6\leq p\leq8$, the middle term
 in \eqref{eq:epsilon} is bounded below by its value at $8$
 and the last term by its value at $6$, giving
 \[
 p\varepsilon>
 \frac{258663895}{114791256}>\frac94.
 \]
 For $p\geq8$, discarding the middle term and bounding the
 last term at $8$ gives
 \[
 p\varepsilon>
 \frac{164983}{52488}>\frac94.
 \]
 These comparisons use that $(3p-2)/p^2$ and $p/2^p$
 decrease for $p\geq6$.  Thus
 $1/m<9/(4p)<\varepsilon$, so
 $F_{p,m}(t_0)=L(t_0)(1/m-\varepsilon)<0$.
 Continuity supplies the root.  The least power of two above
 $4p/9$ is at most $8p/9$.
\end{proof}

\begin{lemma}[Exponential zero count]\label{lem:zero-count}
 If $0<a_1<\cdots<a_s$ and each $c_j\ne0$, then
 $f(x)=\sum_{j=1}^s c_ja_j^x$ has at most as many real zeros,
 counted with multiplicity, as the sign sequence
 $(c_1,\ldots,c_s)$ has changes.
\end{lemma}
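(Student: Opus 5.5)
We argue by induction on the number $\kappa$ of sign changes of $(c_1,\ldots,c_s)$, using Rolle's theorem after division by a suitable exponential (the classical Descartes rule of signs for exponential sums). Write $\lambda_j=\log a_j$, so that $\lambda_1<\cdots<\lambda_s$ and $f(x)=\sum_j c_je^{\lambda_jx}$ is real analytic. Every derivative of $f$ is again a sum of the same form. Moreover $f$ is not identically zero, since the functions $e^{\lambda_jx}$ with distinct $\lambda_j$ are linearly independent; for instance, the term with largest $\lambda_j$ dominates as $x\to\infty$. Hence every zero of $f$ has finite multiplicity, and the zero count $Z(f)$ with multiplicity is well defined, though possibly infinite a priori. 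The induction will show it is finite.

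\emph{Base case} $\kappa=0$. All $c_j$ have the same sign, so $f$ has constant sign everywhere and has no zeros.

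\emph{Inductive step.} Suppose $\kappa\geq1$, and choose an index $k$ at which the sign changes, that is, $c_kc_{k+1}<0$. Pick $\mu$ with $\lambda_k<\mu<\lambda_{k+1}$, and set
\[
g(x)=e^{-\mu x}f(x)=\sum_j c_je^{(\lambda_j-\mu)x}.
\]
Multiplying by the positive function $e^{-\mu x}$ does not change the zeros of $f$ or their multiplicities, so $Z(g)=Z(f)$. Now differentiate:
\[
g'(x)=\sum_j c_j(\lambda_j-\mu)e^{(\lambda_j-\mu)x}.
\]
The new coefficients $c_j(\lambda_j-\mu)$ are all nonzero. Their signs are those of $-c_j$ for $j\leq k$ and those of $c_j$ for $j>k$. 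Flipping the signs of the first block removes exactly the change between positions $k$ and $k+1$ and leaves every other change intact. So $g'$ has exactly $\kappa-1$ sign changes, and by the induction hypothesis $Z(g')\leq\kappa-1$.

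\emph{Rolle step, with multiplicities.} Suppose $g$ has zeros $x_1<\cdots<x_\ell$ with multiplicities $m_1,\ldots,m_\ell$. Each $x_i$ is a zero of $g'$ of multiplicity $m_i-1$. Rolle's theorem also gives at least one zero of $g'$ strictly inside each interval $(x_i,x_{i+1})$, and these are distinct from the $x_i$. Hence
\[
Z(g')\geq\sum_i(m_i-1)+(\ell-1)=Z(g)-1
\]
whenever $g$ has finitely many zeros. If instead $g$ had infinitely many zeros, applying the same argument to any $\kappa+1$ of them would force $Z(g')\geq\kappa$, a contradiction. In either case $Z(f)=Z(g)\leq Z(g')+1\leq\kappa$.

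The main point needing care is this multiplicity bookkeeping in Rolle's step, together with ruling out infinitely many zeros. Both are handled by the finite-subset argument above and by the fact that $f\not\equiv0$.
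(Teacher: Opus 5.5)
Your proof is correct, but it takes a different route from the paper's.

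The paper inducts on the number of terms $s$. It always divides by the smallest exponential $a_1^x$, so differentiation kills the first term and leaves a sum with coefficient signs $(c_2,\ldots,c_s)$. This loses a sign change only when $c_1c_2<0$. When $c_1$ and $c_2$ have the same sign, Rolle alone allows one zero too many. The paper removes it with an asymptotic argument at $-\infty$: far to the left, the normalized function and its derivative both have the common sign of $c_1$ and $c_2$. Hence a zero of the derivative must precede the first zero of the function.

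You instead induct on the number $\kappa$ of sign changes. You keep all $s$ terms and divide by $e^{\mu x}$, with $\mu$ strictly between two consecutive exponents at a chosen sign change. The derivative then has exactly $\kappa-1$ changes. So the bound $Z(g)\le Z(g')+1$ closes the induction uniformly, with no case split and no analysis at $-\infty$. This is the classical P\'olya--Szeg\H{o} argument.

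Your version also makes explicit several points the paper leaves implicit:
\begin{itemize}
\item $f\not\equiv0$;
\item every zero has finite multiplicity;
\item the zero set cannot be infinite, which you handle by applying Rolle to finitely many zeros.
\end{itemize}
In exchange, the paper's version uses a fixed normalization and reduces the length of the coefficient list at each step, at the cost of the extra same-sign case.
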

\begin{proof}
 Induct on $s$.  Dividing $f$ by $a_1^x$ preserves its zeros.
 The derivative of the resulting function is an exponential
 polynomial with coefficient signs $(c_2,\ldots,c_s)$.
 By induction it has at most $V(c_2,\ldots,c_s)$ zeros.
 Rolle's theorem, including multiplicities, allows at most
 one additional zero for $f$.  When $c_1$ and $c_2$ have
 opposite signs, this is exactly the additional sign change.
 When they have the same sign, the normalized function and
 its derivative have that sign sufficiently far to the left.
 Any first zero of the function then requires a preceding
 zero of its derivative.  Together with the zeros supplied
 by Rolle between subsequent zeros, this removes the
 additional allowance.
\end{proof}

\begin{lemma}[Scalar root for $4<p<6$]\label{lem:low-root}
 For every real $4<p<6$ there are a power of two $m$ and
 $t\in(1,\sqrt8)$ satisfying $F_{p,m}(t)=0$.
\end{lemma}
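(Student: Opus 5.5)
The plan is to reduce the lemma to finding, for each $p\in(4,6)$, a single $t\in(1,\sqrt8)$ with $F_{p,\infty}(t)<0$, where $F_{p,\infty}=L+7A-8B$ as in the proof of Lemma~\ref{lem:high-root}. The value at $t=1$ is harmless. The formula $F_{p,m}(1)=49(2^{p-1}-2)+9\cdot2^{p-1}/m$ computed there is positive for every $p>2$. Since $F_{p,m}(t)=F_{p,\infty}(t)+L(t)/m$, once we have $F_{p,\infty}(t_1)<0$ for some $t_1\in(1,\sqrt8)$, we choose a power of two $m$ with $L(t_1)/m<-F_{p,\infty}(t_1)$. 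The intermediate value theorem on $[1,t_1]$ then gives the root. Nothing is claimed about the size of $m$, in contrast to Lemma~\ref{lem:high-root}.

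Next I would simplify $G_p(t):=F_{p,\infty}(t)$ using \eqref{eq:RAB}. Since $2^{p+2}+28\cdot2^p=2^{p+5}$,
\[
G_p(t)=2^{p+5}-112+(2^{p-1}+14)t^p-8\bigl((t+1)^p+(t-1)^p\bigr).
\]
At $p=4$ the binomial expansion gives $G_4(t)=384+6t^4-96t^2=6(t^2-8)^2$. So $G_4\geq0$, which is consistent with the theorem at $p=4$, and $G_4$ has a double zero exactly at $t=\sqrt8$. This explains the interval in the statement. The natural candidate is therefore $t=\sqrt8$, with negativity of $p\mapsto G_p(\sqrt8)$ to be shown on $(4,6)$. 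Once $G_p(\sqrt8)<0$ holds for a given $p$, continuity in $t$ gives some $t_1<\sqrt8$ (and $t_1>1$) with $G_p(t_1)<0$, so the first step applies.

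To prove $g(p):=G_p(\sqrt8)<0$ on $(4,6)$, I would view $g$ as an exponential polynomial in $p$ and apply Lemma~\ref{lem:zero-count}. Writing $s=\sqrt8$, the bases in increasing order are $1,\ s-1,\ 2,\ s,\ s+1,\ 2s$, with coefficients $-112,\ -8,\ 32,\ 14,\ -8,\ \tfrac12$. The sign sequence $(-,-,+,+,-,+)$ has three changes, so $g$ has at most three real zeros counted with multiplicity. I would then locate zeros to exhaust this budget:
\begin{itemize}
\item $g(4)=0$ by the computation above.
\item $g(0)=-112-8+32+14-8+\tfrac12<0$ and $g(p)\to+\infty$ as $p\to\infty$.
\item A direct check gives the sign of $g'(4)$; I expect it to be negative.
\end{itemize}
Then I would evaluate $g$ at $p=6$ and at one or two further points, expecting $g(6)<0$ together with a zero of $g$ somewhere beyond $6$. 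If a sign pattern such as $g(1)>0$ (or another small $p$) forces a zero in $(0,4)$, then together with $p=4$ and the zero beyond $6$ all three allowed zeros are accounted for. In that case $g$ cannot vanish in $(4,6]$, and $g<0$ there follows from $g'(4)<0$.

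The main obstacle is this bookkeeping: producing enough explicitly verified sign changes of $g$ to use up all three zeros permitted by Lemma~\ref{lem:zero-count}. These are numerical checks with irrational bases $\sqrt8\pm1$, so they need rational enclosures. If the count does not close at $t=\sqrt8$, the fallback is to split $(4,6)$ into finitely many subintervals. On each subinterval I would use a fixed rational $t<\sqrt8$, for example $t=2$, where $G_5(2)=-80$ and $G_6(2)=-960$, and apply the same zero-count argument to $p\mapsto G_p(t)$. Near $p=4$ I would use $t=\sqrt8$ as above.
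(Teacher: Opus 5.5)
Your framework is the paper's: the reduction through $F_{p,m}=F_{p,\infty}+L/m$ and $F_{p,m}(1)>0$, the choice $t_0=\sqrt8$ suggested by $F_{4,\infty}(t)=6(t^2-8)^2$, and Lemma~\ref{lem:zero-count} applied to $g(p)=F_{p,\infty}(\sqrt8)$ with sign pattern $(-,-,+,+,-,+)$. The gap is in the zero bookkeeping. You rightly flag it as the crux, but you forecast it wrongly.

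There is no zero of $g$ beyond $6$, and $g(6)<0$ is false. Expanding at $p=6$ gives $F_{6,\infty}(t)=30(t^2-8)(t^4-8)$, so $g(6)=0$ exactly. Also $F_{2,\infty}\equiv0$, so $g(2)=0$. Hence your step ``verify $g(6)<0$ and locate a zero in $(6,\infty)$'' cannot succeed. Your conclusion that $g$ does not vanish on $(4,6]$ also fails at the right endpoint, which is why the lemma is stated only for $p<6$.

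The missing observation is that the three zeros permitted by Lemma~\ref{lem:zero-count} are exactly $p=2,4,6$. Each comes from an exact polynomial identity at an even integer, where $(\sqrt8)^2=8$ removes every irrationality. This exhausts the budget, so these are the only real zeros and each is simple. The largest base $2\sqrt8$ carries the positive coefficient $\tfrac12$, so $g>0$ on $(6,\infty)$. Crossing the simple zero at $6$ then gives $g<0$ on $(4,6)$.

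With this, you need no rational enclosures, no sign of $g'(4)$, and no fallback with $t=2$. You also need no continuity step to some $t_1<\sqrt8$: the intermediate value theorem on $[1,\sqrt8]$, with strict signs at both ends, already gives a root in the open interval.

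Had you carried out the numerics honestly, you would have found $g(6)=0$ and could have repaired the count. A sign change such as $g(1)=-48-\sqrt8<0<g(3)=144-32\sqrt8$, together with the zeros at $4$ and $6$, also uses up all three zeros. As written, however, the plan rests on a false prediction.
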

\begin{proof}
 Put $t_0=\sqrt8$ and $f(p)=F_{p,\infty}(t_0)$.
 Algebraic expansion gives
 \[
 f(p)=-112-8(t_0-1)^p+32\,2^p+14t_0^p
       -8(t_0+1)^p+\tfrac12(2t_0)^p.
 \]
 Its bases are strictly increasing and its coefficient signs
 are $-,-,+,+,-,+$, with three changes.  Direct expansion
 gives, for general $t$,
 \[
 F_{2,\infty}(t)=0,\quad
 F_{4,\infty}(t)=6(t^2-8)^2,\quad
 F_{6,\infty}(t)=30(t^2-8)(t^4-8).
 \]
 Thus $f$ has zeros at $2,4,6$.  Lemma~\ref{lem:zero-count} shows
 that these are all its real zeros and each is simple.
 Its largest-base term is positive, hence $f(p)>0$ for
 $p>6$ and $f(p)<0$ for $4<p<6$.
 Choose a power of two $m>L(t_0)/[-f(p)]$.
 Then
 $F_{p,m}(t_0)=f(p)+L(t_0)/m<0$,
 whereas $F_{p,m}(1)>0$ by the preceding lemma's
 direct calculation.  Continuity gives the asserted root.
\end{proof}

\begin{proof}[Proof of Theorem~\ref{thm:linear-excess}]
 Fix $p>4$ and take the seed of Proposition~\ref{prop:seed}.
 Lemma~\ref{lem:join} gives, for every $r\geq1$,
 $32mr$ equilateral points in dimension
 $r(32m-1)-1$.  Appending zero coordinates preserves all distances,
 so for every $n\geq32m-2$,
 \begin{equation}\label{eq:floor}
 e(\ell_p^n)\geq
 32m\left\lfloor\frac{n+1}{32m-1}\right\rfloor.
 \end{equation}
 Consequently
 \[
 \liminf_{n\to\infty}\frac{e(\ell_p^n)}n
 \geq\frac{32m}{32m-1}>1.
 \]
 Choosing any $c_p<1/(32m-1)$ proves the theorem for all
 sufficiently large $n$.
\end{proof}

\section{High-exponent exclusion and dimension cost}

For Theorem~\ref{thm:exclusion}, a matrix of coordinate powers
approximates the distance matrix of a putative $(n+2)$-point set.
The error bound and rank constraint give a contradiction.

\begin{proof}[Proof of Theorem~\ref{thm:exclusion}]
 Suppose that $n+2$ equilateral points exist in $\ell_p^n$,
 with $n\geq2$ and
 $p\geq\max\{6,2n(\log n)^2\}$.  A larger set contains such a
 subset.  Rescale the common distance to $1$ and translate each
 coordinate by its minimum over the points.  Denote the resulting
 coordinates by $x_{i\ell}$, where $1\leq i\leq n+2$ and
 $1\leq\ell\leq n$.  Every $x_{i\ell}$ belongs to $[0,1]$, and
 \begin{equation}\label{eq:unit-distance}
 \sum_{\ell=1}^n|x_{i\ell}-x_{j\ell}|^p=1
 \qquad(i\ne j).
 \end{equation}
 Define the $(n+2)$-by-$n$ matrices
 \[
 P_{i\ell}=x_{i\ell}^p,\qquad
 Q_{i\ell}=(1-x_{i\ell})^p,
 \]
 and put $K=PQ^{\mathsf T}+QP^{\mathsf T}$ and
 $E=K-(J-I)$, where $J$ is the all-ones matrix.

 First, every entry of $E$ is nonnegative.  For $0\leq x\leq y\leq1$,
 set $d=y-x$ and $t=x(1-y)$.  Then
 $(1-x)y=d+t$, so
 \begin{equation}\label{eq:nonnegative}
 x^p(1-y)^p+(1-x)^py^p-d^p
 =(d+t)^p-d^p+t^p\geq0.
 \end{equation}
 Summing \eqref{eq:nonnegative} over coordinates and using
 \eqref{eq:unit-distance} proves $E_{ij}\geq0$ for $i\ne j$;
 the diagonal is nonnegative directly.

 We bound the off-diagonal error.  Put
 \[
 v=\frac{\log n}{p},\qquad
 u=\frac{(\log n)^2}{p},\qquad
 \rho=n^{-2/p}=e^{-2v}.
 \]
 The elementary inequality
 $t\leq(1-d)^2/4$ gives
 \begin{equation}\label{eq:cosh}
 d+t\leq\left(\frac{1+d}{2}\right)^2
 =d\cosh\!\left(\frac{\log(1/d)}2\right)^2
 \quad(d>0).
 \end{equation}
 Since $\log\cosh z\leq z^2/2$ for $z\geq0$,
 coordinates with $d\geq\rho$ satisfy
 $(d+t)^p-d^p\leq d^p(e^u-1)$.
 For $d<\rho$, the first expression in
 \eqref{eq:cosh} gives
 \[
 (d+t)^p\leq
 \left(\frac{1+\rho}{2}\right)^{2p}
 =n^{-2}\cosh(v)^{2p}\leq n^{-2}e^u.
 \]
 Also $t^p\leq4^{-p}$.  Summing the two classes of
 coordinates and using \eqref{eq:unit-distance}, we obtain
 \begin{equation}\label{eq:error-bounds}
 0\leq E_{ij}\leq
 \eta:=\left(1+\frac1n\right)e^u-1+n4^{-p}
 \quad(i\ne j).
 \end{equation}
 On the diagonal, a separate calculation gives
 \begin{equation}\label{eq:diagonal-bound}
 0\leq E_{ii}
 =2\sum_{\ell=1}^n[x_{i\ell}(1-x_{i\ell})]^p
 \leq\delta:=2n4^{-p}.
 \end{equation}

 There is a nonzero $c\in\R^{n+2}$ satisfying
 $P^{\mathsf T}c=0$ and $\sum_i c_i=0$, since these are
 at most $n+1$ homogeneous equations in $n+2$ unknowns.
 Hence $c^{\mathsf T}Kc=0$ and
 $c^{\mathsf T}(J-I)c=-\|c\|_2^2$, so
 \begin{equation}\label{eq:error-contradiction}
 c^{\mathsf T}Ec=\|c\|_2^2.
 \end{equation}
 Let $s$ be the sum of the positive entries of $c$; the
 negative entries sum to $-s$.  Nonnegativity of $E$
 allows us to discard the terms with $c_ic_j<0$ in an
 upper bound.  The same-sign off-diagonal coefficients
 sum to $2s^2-\|c\|_2^2$.  Thus
 \[
 c^{\mathsf T}Ec
 \leq\eta(2s^2-\|c\|_2^2)+\delta\|c\|_2^2
 \leq\left(\frac n2\eta+\delta\right)\|c\|_2^2,
 \]
 where the last step uses
 $4s^2=\|c\|_1^2\leq(n+2)\|c\|_2^2$.

 The coefficient is strictly less than $1$.  For $n\geq3$,
 $u\leq1/(2n)$ and $e^u\leq(1-u)^{-1}
 \leq2n/(2n-1)$.  Since $\log n\geq\log3>1$,
 the hypothesis implies $p\geq2n$.  Therefore
 \[
 \frac n2\eta+\delta
 \leq\frac{3n}{2(2n-1)}
   +\left(\frac{n^2}{2}+2n\right)16^{-n}
 \leq\frac9{10}+\frac{21}{8192}<1.
 \]
 The last exponential expression decreases for integer $n\geq3$.
 For $n=2$, the bounds $\log2<3/4$ and $p\geq6$ give
 $u<1/8$ and $e^u\leq8/7$, whence
 \[
 \frac n2\eta+\delta\leq
 \frac57+\frac6{4096}<1.
 \]
 This contradicts \eqref{eq:error-contradiction}.

 Finally, $n+1$ points always exist.  Take
 $e_1,\ldots,e_n,-a(1,\ldots,1)$, where $a\in(0,1)$
 solves $(1+a)^p+(n-1)a^p=2$.
 Their pairwise distance-powers all equal $2$.
 Thus $e(\ell_p^n)=n+1$.
\end{proof}

\begin{proof}[Proof of Theorem~\ref{thm:dimension}]
 Fix $p\geq6$ and $k\geq1$.  Let $m$ be the seed order
 of Lemma~\ref{lem:high-root}.  Joining $k$ copies of the seed yields
 $32mk$ points in dimension
 $n=k(32m-1)-1$.  Since $32mk=n+1+k$,
 \[
 D_k(p)\leq k(32m-1)-1
 \leq k\left(\frac{256}{9}p-1\right)-1
 <\frac{256}{9}kp.
 \]
 Conversely, let $n=D_k(p)$.  Then $n\geq2$ and
 $\ell_p^n$ contains at least $n+2$ equilateral points.
 Theorem~\ref{thm:exclusion} implies
 $p<2n(\log n)^2$.  If $n\leq p$, this gives
 $p<2n(\log p)^2$.  If $n>p$, the desired lower bound
 holds because $2(\log p)^2>1$.  In either case,
 \[
 \frac{p}{2(\log p)^2}<D_k(p).
 \]
 For fixed $k$, taking logarithms of the two bounds gives
 $\log D_k(p)/\log p\to1$, which is
 $D_k(p)=p^{1+o(1)}$.
\end{proof}

For each fixed $p>4$, the number of equilateral points beyond $n+1$
grows at least linearly with dimension.  For each fixed dimension,
sufficiently large $p$ restores the maximum $n+1$.  Our bounds place
the dimension needed to retain any fixed number of extra points at
$p^{1+o(1)}$.  The remaining question is whether that dimension must
grow linearly with $p$.

\medskip
\noindent\textbf{Question.} For each fixed integer $k\geq1$, is there
a constant $c_k>0$ such that $D_k(p)\geq c_kp$ for all sufficiently
large real $p$?

\section*{Data access statement}
The Lean formalisation is available on Zenodo
(DOI: \texttt{10.5281/zenodo.22917956}).

\end{document}